\numberwithin{equation}{section}
\newtheorem{thm}{Theorem}%[section]
\newtheorem{lem}{Lemma}
\theoremstyle{remark}
\theoremstyle{definition}
\newcommand{\1}[1]{\mathbbm{1}\!\left[#1\right]}
\newcommand{\Pb}[1]{\text{P}\!\left[#1\right]}
\newcommand{\deter}{\mathrm{det}}
\newcommand{\logit}{\mathrm{logit}}
\newcommand{\rone}{\mbox{${\mathbb{R}}$}}
\newcommand\myeq{\stackrel{\mathclap{\normalfont\mbox{$\mathcal{D}$}}}{=}}
\title{On the Domain of Attraction of a Tracy-Widom Law with Applications to Testing Multiple Largest Roots}
\author[hec]{Didier Ch\'etelat \fnref{fn1}}
\address[hec]{Department of Decision Sciences, HEC Montr\'eal}
\author[ashoka]{Rajendran Narayanan}
\address[ashoka]{Department of Mathematics, Ashoka University}
\author[cornell]{Martin T. Wells  \fnref{fn2}}
\address[cornell]{Department of Statistical Science, Cornell University}
\begin{document}
\begin{abstract}
\noindent   The greatest root statistic arises as the test statistic in several multivariate analysis settings. Suppose there is a global null hypothesis $H_0$ that consists of $m$ different independent sub null hypotheses, i.e., $H_0 = \cap_{k=1}^{m} H_{0k}$  and suppose the greatest root statistic is used as the test statistic for each sub null hypothesis. Such problems may arise when conducting a batch MANOVA or several batches of pairwise testing for equality of covariance matrices. Using the union-intersection testing approach and by letting the problem dimension $p \rightarrow \infty$ faster than $m \rightarrow \infty$ we show that $H_0$ can be tested using a Gumbel distribution to the approximate the critical values. Although the theoretical results are asymptotic, simulation studies indicate that the approximations are very good even for small to moderate dimensions. The results are general and can be applied in any setting where the greatest root statistic is used, not just for the two methods we use for illustrative purposes.\\

\smallskip
\noindent AMS 2010 subject classifications: 60G70, 62E20, 62H15.

\noindent Key words and phrases: Characteristic root, equality of covariance matrices, greatest root statistic, Gumbel distribution, MANOVA, multiple testing, Tracy-Widom laws, union-intersection test

% \keywords{Loss estimation \and conditional inference \and lasso \and variable selection \and \text{SURE} \and  unbiased estimator of loss}
% \PACS{PACS code1 \and PACS code2 \and more}
% \subclass{62C99 \and 62F30}
\end{abstract}
\maketitle

%%%%%%%%%%%%%%%%%%%%%%%%%%%%%%%%%%%%%%%%%%%%%%%%%%%%%%%%%%%%%%%%%%%%%%%%%%%%%%
\section{Introduction}\label{Intro}
%%%%%%%%%%%%%%%%%%%%%%%%%%%%%%%%%%%%%%%%%%%%%%%%%%%%%%%%%%%%%%%%%%%%%%%%%%%%%%
\noindent Assuming the data generating process is multivariate Gaussian, the test statistics for hypotheses testing using the union-intersection approach arising in several multivariate analysis techniques is the largest eigenvalue of the multivariate beta distribution. More formally, suppose $X$ is an $n_1 \times p$ data matrix with each row being an independent copy of $N_p(0,\Sigma)$ then $A= X^TX \sim W_p(\Sigma,n_1)$ has a $p$ dimensional Wishart distribution with $n_1$ degrees of freedom. Let $B \sim W_p(\Sigma,n_2)$ be another Wishart distribution with $n_2$ degrees of freedom independent of $A$ with the same scale matrix $\Sigma$. If $n_1 > p$ then $A^{-1}$ exists and the non-zero eigenvalues of the matrix $A^{-1}B$ generalize the univariate $F$ statistic. The scale matrix has no effect on the distribution of these eigenvalues and so without loss of generality we can set $\Sigma = I_p$. The distribution of the random matrix $(A+B)^{-1}B$ is a generalization of the univariate beta distribution and is called the multivariate beta distribution or the Jacobi ensemble. The largest eigenvalue $\theta_{p}$ (also denoted by $\theta(p,n_1,n_2)$) of $(A+B)^{-1}B$ is a random variable called the {\it greatest root statistic} and since $A$ is positive definite $0 < \theta_{p} < 1$. We can also obtain $\theta_{p}$ as the largest root of the determinantal equation
$$
\deter [B - \theta (A+B)] = 0.
$$

\noindent The greatest root statistic arises as the null hypothesis distribution for the union-intersection test for several classical techniques such as MANOVA, test for equality of covariance matrices, canonical correlations and so on (see \cite{Muirhead}). \\

\noindent We consider the following problem. Suppose there is a global null hypothesis $H_0$ that consists of $m$ different independent sub null hypotheses, i.e., $H_0 = \cap_{k=1}^{m} H_{0k}.$  Such hypotheses arise when one is integrating data sets or assess effects across various treatment levels.   Consider a union-intersection type testing approach where the global null hypothesis is true if and only if each of the component sub null hypothesis is true. In such a setting the global null hypothesis would be rejected if the maximum of the test statistics arising from each sub null hypothesis falls in the appropriate rejection region. In particular, suppose the test statistic from each sub null hypothesis is the greatest root statistic, i.e., $\theta_{p,1},\theta_{p,2},\ldots,\theta_{p,m}$ where $\theta_{p,k}$ for each $k=1,2,\ldots,m$ is the greatest root statistic from the $k^{\text{th}}$ component sub null hypothesis. Then the decision rule to reject the global null hypothesis $H_0$ is, if the $\max\{\theta_{p,1},\theta_{p,2},\ldots,\theta_{p,m}\} > c$ for some appropriately chosen constant $c$. We show that the maximum of an i.i.d.\  sequence of the greatest root statistic falls in the Gumbel domain of attraction as $m \rightarrow \infty$ and hence the Gumbel distribution can be used to construct a test statistic to do inference for the global null hypothesis. Our approximation relies on two levels of asymptotics. The matrix dimension of each component multivariate beta distribution goes to infinity and also the number of sub null hypotheses under consideration goes to infinity but we let the matrix dimension go to infinity faster than the number of sub null hypotheses under consideration. In other words $p \rightarrow \infty$ faster than $m \rightarrow \infty$ in the sense to be precise made in Section \ref{Main}. \\

\noindent \cite{Dumitriu} review the fact that the exact null distribution of the greatest root statistic $\theta(p,n_1,n_2)$ is notoriously difficult to calculate. Deriving the exact distribution of the largest eigenvalue relies on performing a complicated $p-1$ dimensional integral with the Vandermonde term in the integrand. \cite{Constantine} showed that the marginal distribution of the largest eigenvalue can be expressed in terms of a hypergeometric function with a matrix argument. The cumulative distribution function of the greatest root statistic is
\noindent \begin{align}\label{hyp1}
 P(\theta_{p,1} < x) &=C_{1,p}  x^{\frac{pn_1}{2}}  {_2}F_1\Big(\frac{n_1}{2},\frac{-n_2+p+1}{2};\frac{n_1+p+1}{2};xI\Big) ,
\end{align}
where
\begin{equation*}
 C_{1,p}= \frac{\Gamma_p^{(1)}\Big(\frac{n_1+n_2}{2}\Big)\Gamma_p^{(1)}\Big(\frac{p+1}{2}\Big)}{\Gamma_p^{(1)}\Big(\frac{n_1+p+1}{2}\Big)\Gamma_p^{(1)}\Big(\frac{n_2}{2}\Big)}
\end{equation*}
and ${_2}F_1(\cdot,\cdot;\cdot,xI)$ denotes the hypergeometric function with a matrix argument, which in this case is considered to be the identity matrix. \cite{Gupta1985} gave exact Pfaffian expressions for hypergeometric functions with a matrix argument when the arguments are multiples of the identity matrix and also showed that the c.d.f.\ of the greatest root statistic can be expressed as a Pfaffian of a skew symmetric matrix whose entries are double integrals. \cite{Koev2006} exploit the recursion relations of Jack functions to develop efficient MATLAB implementations to evaluate the hypergeometric functions with a matrix argument.  More recently \cite{Butler2011} provide computational implementations of the theoretical framework advanced by \cite{Gupta1985}. \cite{Butler2011} express the double integrals of the Pfaffian in terms of series expansions that are computed using the Maple software.  There is an extensive literature on the algorithmic and computational aspects of dealing with the
hypergeometric functions with a matrix argument. An elegant treatment on the topic can be found in \cite{Dumitriu} and the references therein. \\

\noindent Moving away from the issue of computational techniques to evaluate the hypergeometric function with a matrix argument, in the remarkable paper of \cite{Johnstone2008}, it was shown that the greatest root statistic with suitable centering and scaling converges to the now ubiquitous Tracy-Widom distribution \cite{TW1994}, \cite{TW1996}. In particular, \cite{Johnstone2008} showed that assuming $p$ is even and that $p,n_{1}(p)$ and $n_{2}(p) \rightarrow \infty$ together in such a way that
\begin{align*}
  \lim\limits_{p \rightarrow \infty}\frac{\min(p,n_2)}{n_1 + n_2} > 0, \qquad \lim\limits_{p \rightarrow \infty} \frac{p}{n_1} < 1.
\end{align*}
Then the logit transform $T_p = \logit(\theta_{p}) = \log(\theta_{p}/1-\theta_{p})$ is approximately distributed according to the Tracy-Widom law, i.e.,
\begin{equation}
 \frac{T_p - \mu_p}{\sigma_p} \Rightarrow F_1
\end{equation}
where $F_1$ is the cdf of the Tracy-Widom distribution arising as a limiting distribution of the largest eigenvalue of Gaussian orthogonal ensembles and $\mu_p$ and $\sigma_p$ are centering and scaling factors to make the asymptotics work. We focus on the asymptotics as opposed to exact evaluation of the greatest root statistic owing to the second order rate of convergence $O(p^{-2/3})$ of the greatest root statistic to the Tracy-Widom law. As \cite{JohnstoneMa2012} show, this convergence rate can be guaranteed for appropriate centering and scaling factors and as illustrated by \cite{Johnstone2009} the Tracy-Widom approximation is fairly sharp even for small values of $p$ and works quite well for many applied data analysis questions. \\

The results are applicable in several multivariate analysis settings where the greatest root statistic plays a role. In particular  consider the following hypothesis testing framework to conduct pairwise testing of equality of covariance matrices arising from a multivariate normal sample. Let
$$
H_{01}:\Sigma_{11}=\Sigma_{12},\, H_{02}:\Sigma_{21}=\Sigma_{22}, \, \ldots, H_{0m}:\Sigma_{m1}=\Sigma_{m2}.
$$
Define the global hypothesis $H_0$ as $H_0 = \bigcap_{k=1}^m H_{0k}$.  Let $n_{k1},n_{k2}$ denote the sample sizes for the $k^{\text{th}}$ hypothesis test for $k=1,2,\ldots,m$ and let  $S_{k1},S_{k2}$ denote the covariance estimators for the $k^{\text{th}}$ hypothesis test.  Assuming that the underlying data generating process for each of the $m$ situations is a multivariate normal sample then under $H_{0k}$, $S_{k1} \sim W_p(\Sigma_k,n_{k1})$ and $S_{k2} \sim W_p(\Sigma_k,n_{k2})$ independent of
$S_{k1}$ where $\Sigma_k$ is the common covariance matrix under $H_{0k}$. Thus the test statistic for $H_{0k}$ is  $\theta_{p,k}$, which is the largest eigenvalue of $(n_{k1}S_{k1} + n_{k2}S_{k2})^{-1}n_{k2}S_{k2}$. Then $\max\{\theta_{p,1},\theta_{p,2},\ldots,\theta_{p,m}\}$ can be used to test$H_0$.  We will discuss this covariance testing problem in more detail in Section \ref{covmatrix}.\\

\noindent Our work is motivated to understand the bridge between two asymptotic regimes of extremes. From classical extreme value theory we know that the maximum of an i.i.d.\ sequence of random variables converges to one of three distributions depending on whether the random variables are light-tailed, heavy-tailed or have a finite support. For light-tailed random variables it is well known that the maximal domain of attraction is the Gumbel distribution and the Tracy-Widom distribution appears as the limiting distribution of random matrices with light-tailed i.i.d.\ entries. This prompts us to study the asymptotic maximal behaviour of i.i.d.\ extremal eigenvalues arising from a sequence of random matrices having light-tailed entries.

%%%%%%%%%%%%%%%%%%%%%%%%%%%%%%%%%%%%%%%%%%%%%%%%%%%%%%%%%%%%%%%%%%%%%%%%%%%%%%
\section{Tracy Widom Distribution}
%%%%%%%%%%%%%%%%%%%%%%%%%%%%%%%%%%%%%%%%%%%%%%%%%%%%%%%%%%%%%%%%%%%%%%%%%%%%%%
\noindent An important question of theoretical and practical interest is understanding the behavior of the largest eigenvalue of various classes of random matrices. If we consider a diagonal matrix with Gaussian entries then the largest eigenvalue of such a matrix would converge to the Gumbel distribution as the matrix dimension goes to infinity. This is because the maximal domain of attraction of the Gaussian distribution is the Gumbel distribution. However, when we consider a symmetric matrix with each entry being a real valued Gaussian random variable or a symmetric Hermitian random matrix with each entry being a complex valued Gaussian random variable then the largest eigenvalue converges to the Tracy-Widom distribution. It is indeed a remarkable fact that this distribution arises as the limiting distribution of a large class of random matrices and in fact the limit distribution of the largest eigenvalue has the Tracy-Widom law even if the assumption of i.i.d.\ Gaussian entries of the random matrix are
relaxed, see for example \cite{Soshnikov2002}. However, as shown in \cite{Soshnikov2006} when the matrix entries are heavy-tailed,  then the the joint distribution of the edge eigenvalues converge weakly to the inhomogeneous Poisson random point process. \\

\noindent Let $F_1$ denote the cumulative distribution function (cdf) of the Tracy-Widom distribution arising from the Gaussian orthogonal ensemble (GOE) and let $F_2$ be the cdf of the Tracy-Widom distribution arising from the Gaussian unitary ensemble (GUE) then from \cite{TW1994,TW1996} we know that
\begin{align}\label{TWUE}
F_2(x) &=\exp\left(-\int\limits\limits_{x}^{\infty}(y-x)q^2(y)dy\right)
\end{align}
and
\begin{align}\label{TWOE}
F_1(x) &= (F_2(x))^{\frac{1}{2}}\exp\left(-\frac{1}{2}\int\limits\limits_{x}^{\infty}q(y)dy\right),
\end{align}
where $q(x)$ is the solution of the classical Painlev\'{e} non-linear second order differential equation
\begin{align}
 q''(x) &= xq(x) + 2q^3(x),\qquad q(x) \sim \text{Ai}(x) \quad \text{as} \quad x \rightarrow \infty
\end{align}
and Ai$(x)$ denotes the Airy function.  \cite{Johnstone2001,Johnstone2008} demonstrated a universality property by showing that the largest eigenvalues of the Wishart matrix and the multivariate beta matrix both converge to the Tracy-Widom distribution, subject to some growth conditions on the size of the design matrix. \cite{NW2013} showed that the standardized maximum of an i.i.d.\ sequence of random variables having the Tracy-Widom distribution arising from the Gaussian unitary ensembles as in \eqref{TWUE} belongs to the Gumbel domain of attraction. \\
\begin{figure}[h!]
\centering
\begin{subfigure}{.5\textwidth}
  \centering
  \includegraphics[width=.9\linewidth]{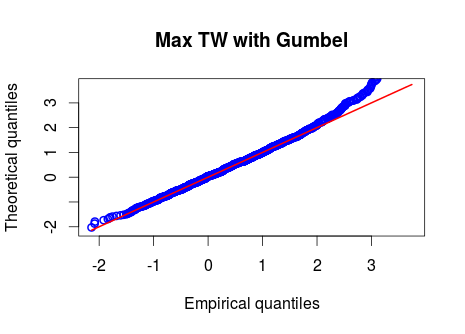}
  \caption{QQ Plot of Max TW with Gumbel}
  \label{fig:sub1}
\end{subfigure}%
\begin{subfigure}{.5\textwidth}
  \centering
  \includegraphics[width=.9\linewidth]{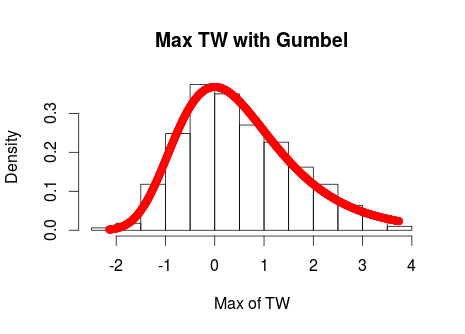}
  \caption{Histogram of Max TW with Gumbel}
  \label{fig:sub2}
\end{subfigure}
\caption{Simulated maximums of TW with Gumbel}
\label{fig:MaxTW_Gumbel}
\end{figure}

\noindent If we take an i.i.d.\ sequence of random variables having the Tracy-Widom (TW) distribution arising from the Gaussian orthogonal ensemble, as in, \eqref{TWOE} then the maximum of such a sequence asymptotically converges to the Gumbel distribution. (The authors discovered a crucial typo in one of the references while proving this result). Figure \ref{fig:MaxTW_Gumbel} \subref{fig:sub1} shows a QQ plot of simulated maximums of TW random variables and the standard Gumbel distribution based on $10000$ samples and Figure \ref{fig:MaxTW_Gumbel} \subref{fig:sub2} depicts a histogram of simulated maximums of TW random variables overlaid with a standard Gumbel distribution. A Kolmogorov-Smirnov test to check equality of normalized maximum of i.i.d.\ Tracy-Widom random variables with the Gumbel distribution fails to reject the null hypothesis at a p-value of $0.4658$.\\
% \textsc{\begin{figure}[h!]
% \centering
% \includegraphics[width=9cm,height=9cm]{histogram_qqplot.png}
% \caption{$QQ$ plot of maximal Tracy-Widom ($\beta=1$) and the Gumbel distribution}
% \label{fig:MaxTW_Gumbel}
% \end{figure}}

%%%%%%%%%%%%%%%%%%%%%%%%%%%%%%%%%%%%%%%%%%%%%%%%%%%%%%%%%%%%%%%%%%%%%%%%%%%%%%
\section{Main Result}\label{Main}
%%%%%%%%%%%%%%%%%%%%%%%%%%%%%%%%%%%%%%%%%%%%%%%%%%%%%%%%%%%%%%%%%%%%%%%%%%%%%%
For every $p\geq1$ let $\theta_{1,1}(p,n_1,n_2),\ldots,\theta_{m,1}(p,n_1,n_2)$ denote an i.i.d.\ sequence of largest eigenvalues obtained from an i.i.d.\ sequence of multivariate beta random matrices $A_{p,1},\dots,A_{p,m}$ and $n_1 \geq p$. The meanings of $n_1$ and $n_2$ are as described in Section \ref{Intro}. Let $W_{p,k} = \logit\Big(\theta_{k,1}(A_{p,k})\Big)$ be their respective logit transformed largest eigenvalues. Denote by $F_1$ the cumulative distribution function of the Tracy-Widom distribution for the real case as in \eqref{TWOE}, and define the following normalization constants:
\begin{align}&
b_m=F_1^{-1}\Big(1-\frac1m\Big),
\qquad
a_m = 1/(mF_{1}'(b_m)).
\label{norm-consts}
\end{align}
Then from \cite{Johnstone2008} we know that
\begin{align}
\frac{T_{p,k} - \mu_{p, n_{1p}, n_{2p}}}{\sigma_{p, n_{1p}, n_{2p}}} \Rightarrow Z_1 \sim F_1,
\end{align}
where
\begin{align*}&
\mu_{p,n_{1p},n_{2p}}=2\log\tan\left(\frac{\phi_{p,n_{1p},n_{2p}}}2+\frac{\gamma_{p,n_{1p},n_{2p}}}2\right),
\\&
\sigma^3_{p,n_{1p},n_{2p}}=\frac{16/(n_{1p}+n_{2p}-1)^2}{\sin^2(\phi_{p,n_{1p},n_{2p}}+\gamma_{p,n_{1p},n_{2p}})\sin\phi_{p,n_{1p},n_{2p}}\sin\gamma_{p,n_{1p},n_{2p}}}
\end{align*}
and
\begin{align*}&
\phi_{p,n_{1p},n_{2p}}=2\arcsin\left(\frac1{\sqrt{2}}\sqrt{\frac{2\max(p,n_{1p})-1}{n_{1p}+n_{2p}-1}}\right),
\quad
\gamma_{p,n_{1p},n_{2p}}=2\arcsin\left(\frac1{\sqrt{2}}\sqrt{\frac{2\min(p,n_{1p})-1}{n_{1p}+n_{2p}-1}}\right).
\end{align*}
%-----------------------------------------------------------------------------
\begin{thm}\label{MEG}
Let $p, n_{1p}, n_{2p} ,m_p\rightarrow\infty$, with $n_{1p} \geq p$, $\lim_{p\rightarrow\infty}\min(p,n_{1p})/(n_{1p}+n_{2p})>0$ and $\lim_{p\rightarrow\infty}m_p/p^{2/3}<\infty$. Let $X^p_k$ denote the centred and scaled value obtained from the logit transform of the greatest root statistic,
\begin{align*}
X^p_k = \frac{T_{p,k} - \mu_{p,n_{1p},n_{2p}}}{\sigma_{p,n_{1p},n_{2p}}}.
\end{align*}
Then
\begin{align*}
Y^p=\frac{\underset{1\leq k\leq {m_p}}{\max} X^p_k-b_{m_p}}{a_{m_p}}\xrightarrow[p \rightarrow\infty]{\mathcal{D}}\text{\emph{Gumbel}}(0,1).
\end{align*}
\end{thm}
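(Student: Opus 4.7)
The plan is to combine two ingredients: the Tracy--Widom limit of the logit-transformed greatest root statistic together with its rate of convergence, and the fact that $F_1$ lies in the Gumbel maximum domain of attraction with the normalizing sequences $(a_m,b_m)$ chosen in \eqref{norm-consts}. Let $F_p$ denote the common CDF of the $X^p_k$ (identical across $k$ by i.i.d.). By independence,
\[
P(Y^p \leq y) = F_p(a_{m_p} y + b_{m_p})^{m_p},
\]
and the goal is to show the right-hand side tends to $\exp(-e^{-y})$.

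First, I would establish that $F_1$ belongs to the Gumbel maximum domain of attraction, so that
\[
F_1(a_{m_p} y + b_{m_p})^{m_p} \longrightarrow \exp(-e^{-y}).
\]
This is the real-case analogue of the GUE result of \cite{NW2013} referenced in the excerpt, and it is presumably where the typo in the cited reference surfaces. The verification proceeds through a von Mises-type condition on the hazard rate of $F_1$ near $+\infty$, which follows from the Painlev\'{e}-II representation \eqref{TWOE} combined with the Airy-type tail asymptotics of $F_1$.

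Second, I would invoke the quantitative Tracy--Widom approximation of \cite{JohnstoneMa2012}, which sharpens the limit of \cite{Johnstone2008} to a uniform rate
\[
\sup_{x \in \mathbb{R}} \bigl| F_p(x) - F_1(x) \bigr| = O(p^{-2/3})
\]
in the prescribed regime for $(p,n_{1p},n_{2p})$. Writing $x_p(y) = a_{m_p} y + b_{m_p}$ and $\delta_p(y) = F_p(x_p(y)) - F_1(x_p(y))$, I would then decompose
\[
m_p \log F_p(x_p(y)) = m_p \log F_1(x_p(y)) + m_p \log\!\Bigl(1 + \delta_p(y)/F_1(x_p(y))\Bigr).
\]
By the previous step the first summand tends to $-e^{-y}$, while a Taylor expansion of $\log(1+\cdot)$, together with $F_1(x_p(y)) \to 1$ and $m_p(1-F_1(x_p(y))) \to e^{-y}$, reduces the claim to showing $m_p \delta_p(y) = o(1)$.

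\textbf{Main obstacle.} The delicate step is precisely this estimate $m_p \delta_p(y) = o(1)$. Under the hypothesis $m_p = O(p^{2/3})$, the global uniform bound $|\delta_p| = O(p^{-2/3})$ only gives $m_p \delta_p = O(1)$, which is not enough. The resolution is that the \cite{JohnstoneMa2012} rate is in fact sharper in the upper tail, and the relevant argument $x_p(y) = b_{m_p} + O(a_{m_p})$ is pushed into that tail as $p\to\infty$; coupling this tail-refined bound with the Painlev\'{e}-II asymptotics for $1-F_1(x)$ as $x\to\infty$ is what drives $m_p \delta_p(y)$ to zero. Making this tail refinement quantitative, and checking that it is uniform in the $(n_{1p},n_{2p})$ regime allowed by the theorem, is the main technical work of the proof.
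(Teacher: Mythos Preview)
Your overall strategy is correct and matches the paper's: combine the Gumbel domain of attraction of $F_1$ with a quantitative Tracy--Widom approximation for $F_p$, and control the discrepancy $m_p\,\delta_p(y)$. The paper uses a telescoping product bound rather than your logarithmic decomposition, but these are equivalent devices.

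Where you and the paper differ is in the handling of the ``main obstacle.'' You correctly diagnose that a uniform $O(p^{-2/3})$ bound on $F_p-F_1$ only yields $m_p\,\delta_p=O(1)$, and you then cast the needed tail refinement as substantial additional technical work to be extracted from \cite{JohnstoneMa2012}. In fact the paper sidesteps this entirely: Theorem~1 of \cite{Johnstone2008} (with $s_0=0$) already delivers the pointwise bound
\[
\bigl|F_p(x)-F_1(x)\bigr|\le \frac{C}{p^{2/3}}\,e^{-x/2}\qquad\text{for all }x\ge 0,
\]
with the exponential weight built in. Since Lemma~\ref{lem:ABA} gives $a_{m_p}y+b_{m_p}\to\infty$, one has $e^{-(a_{m_p}y+b_{m_p})/2}\to 0$, and hence
\[
m_p\,\bigl|\delta_p(y)\bigr|\le C\,\frac{m_p}{p^{2/3}}\,e^{-(a_{m_p}y+b_{m_p})/2}\longrightarrow 0
\]
directly from $\limsup m_p/p^{2/3}<\infty$. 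No further tail analysis of the Johnstone--Ma estimates is needed; the weight $e^{-x/2}$ is precisely the ``tail refinement'' you were looking for, and it is already stated in the cited theorem. With this in hand the proof collapses to a few lines.
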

%-----------------------------------------------------------------------------

\noindent Before proving the main result, we present a few lemmas. Lemma \ref{maxtw} is the analogue of Theorem $1$ of \cite{NW2013}.

%-----------------------------------------------------------------------------
\begin{lem}\label{maxtw}
Let $Z_1, Z_2, \ldots $ be a sequence of i.i.d.\ random variables having the Tracy-Widom distribution arising from a Gaussian orthogonal ensemble (GOE) with cumulative distribution function $F_1$ as given in \eqref{TWOE}. Let $x^{\ast} = \sup\{x \in \rone | F_1(x) < 1\}$ denote the right end point of $F_1$. Here $x^{\ast} = \infty$. Then for $m_p\rightarrow\infty$ as $p\rightarrow\infty$, we have
\begin{align*}
\frac{{\underset{1\leq k\leq {m_p}}{\max}} Z_k-b_{m_p}}{a_{m_p}}\xrightarrow[p\rightarrow\infty]{\mathcal{D}}\text{\emph{Gumbel}}(0,1).
\end{align*}
\end{lem}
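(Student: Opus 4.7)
My plan is to reduce the lemma to a single analytic statement about the right tail of $F_1$, namely that $F_1$ lies in the Gumbel maximum domain of attraction with canonical normalizing constants $b_m = F_1^{-1}(1-1/m)$ and $a_m = 1/(m F_1'(b_m))$. Once this is established, the conclusion follows immediately from the Fisher--Tippett--Gnedenko theorem: because the underlying limit is in distribution, passing from the classical statement for $m\to\infty$ through integers to the subsequence statement for $m_p\to\infty$ adds nothing, and the normalizing sequences in \eqref{norm-consts} are already the standard von Mises choices whenever $F_1$ is continuously differentiable with positive density near $x^*=\infty$.

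\medskip

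The analytic heart of the argument will be verifying the von Mises sufficient condition
\begin{align*}
\lim_{x\to\infty}\frac{d}{dx}\!\left[\frac{1-F_1(x)}{F_1'(x)}\right]=0,
\end{align*}
for which I need sharp tail asymptotics of $1-F_1(x)$. Starting from the representation \eqref{TWOE},
\begin{align*}
F_1(x)=\sqrt{F_2(x)}\,\exp\!\left(-\tfrac12\int_x^\infty q(y)\,dy\right),
\end{align*}
I would expand each factor to first order around $1$ to obtain $1-F_1(x)=\tfrac12(1-F_2(x))+\tfrac12\int_x^\infty q(y)\,dy+\text{(smaller products)}$. The GUE tail $1-F_2(x)$, recovered from \eqref{TWUE}, decays like $e^{-\frac{4}{3}x^{3/2}}$ and is therefore negligible. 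For the remaining piece I would use the Painlev\'e II boundary condition $q(x)\sim\text{Ai}(x)$ together with the classical Airy asymptotic $\text{Ai}(x)\sim\frac{1}{2\sqrt{\pi}}\,x^{-1/4}\,e^{-\frac{2}{3}x^{3/2}}$ and integration by parts on $\int_x^\infty y^{-1/4}e^{-\frac{2}{3}y^{3/2}}\,dy$ to produce the leading-order estimate
\begin{align*}
1-F_1(x)\sim\frac{1}{4\sqrt{\pi}}\,x^{-3/4}\,e^{-\frac{2}{3}x^{3/2}}.
\end{align*}
Differentiating yields $F_1'(x)\sim\frac{1}{4\sqrt{\pi}}\,x^{-1/4}\,e^{-\frac{2}{3}x^{3/2}}$, hence $(1-F_1(x))/F_1'(x)\sim x^{-1/2}$, whose derivative is $\sim -\tfrac12 x^{-3/2}\to 0$. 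The von Mises condition thus holds, $F_1$ belongs to the Gumbel MDA, and Fisher--Tippett--Gnedenko delivers the Gumbel limit under \eqref{norm-consts}.

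\medskip

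The principal obstacle is not the extreme value side of the argument, which is classical, but the tail estimate itself: the paper explicitly flags a ``crucial typo in one of the references'' regarding this expansion, so I would expect that the bookkeeping of constants---especially the exponent $-1/4$ inherited from the Airy prefactor and the fact that $\tfrac12\int_x^\infty q(y)\,dy$ rather than $\tfrac12(1-F_2(x))$ produces the dominant term---is where the greatest care is required. A secondary point I would verify rigorously is the interchange between the pointwise asymptotics $q(x)\sim\text{Ai}(x)$ and the improper integral $\int_x^\infty q(y)\,dy$, which I would handle by bounding $q-\text{Ai}$ uniformly on $[x,\infty)$ from the Painlev\'e II decay and dominating the remainder by the same Gaussian-type tail. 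Once the analytic asymptotics are correctly established, every remaining step is a routine application of classical extreme value theory.
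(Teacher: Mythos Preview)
Your strategy---verify a von Mises condition from the right-tail asymptotics of $F_1$ and then invoke Fisher--Tippett--Gnedenko with the canonical constants---is exactly what the paper does, and your leading-order tail
\[
1-F_1(x)\sim \frac{1}{4\sqrt{\pi}}\,x^{-3/4}e^{-\frac23 x^{3/2}},\qquad
F_1'(x)\sim \frac{1}{4\sqrt{\pi}}\,x^{-1/4}e^{-\frac23 x^{3/2}}
\]
matches the paper's (the paper uses the equivalent form $\lim (1-F_1)F_1''/(F_1')^2=-1$ of the condition). The one genuine soft spot is your final move: from $(1-F_1)/F_1'\sim x^{-1/2}$ you write ``whose derivative is $\sim -\tfrac12 x^{-3/2}$.'' Asymptotic equivalence is not preserved under differentiation without extra control on the error term, so this step, as stated, is a gap. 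The paper avoids it by never differentiating an asymptotic relation: it writes the \emph{exact} identity $F_1'=F_1R_1$ with $R_1(x)=\tfrac12\big[q(x)+\int_x^\infty q^2\big]$, hence $F_1''=F_1(R_1^2+R_1')$ and
\[
\frac{(1-F_1)F_1''}{(F_1')^2}=\frac{1-F_1}{F_1}+\frac{1-F_1}{F_1}\cdot\frac{R_1'}{R_1^2},
\]
and then inserts separate asymptotics for $1-F_1$, $R_1$, and $R_1'=\tfrac12(q'-q^2)$, each obtained directly from known Painlev\'e/Airy expansions. That algebraic detour is what makes the limit $-1$ rigorous without ever differentiating a $\sim$. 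Your plan becomes complete if you replace the formal differentiation of the ratio by this exact-identity route (or, equivalently, supply an asymptotic for $F_1''$ independently before forming the quotient).
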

%-----------------------------------------------------------------------------
%-----------------------------------------------------------------------------
\begin{proof}
 We utilize Von Mises' condition to show the validity of our claim (the reader can refer to \cite{dehaan} or \cite{Resnick2008} for further details). Namely, if
\begin{align}\label{requiredlimit}
L(x) = \lim\limits_{x \rightarrow x^{\ast}}\frac{(1-F_1(x))F_{1}''(x)}{F_{1}'(x)^2} = -1,
\end{align}
then $F_1$ is in the domain of attraction of the Gumbel distribution. As a reminder, here $x^{\ast} = \infty$. To simplify calculations, we obtain the following from \cite{TW1}
\begin{align*}&
E(x)=\exp\left(-\frac12\int_x^\infty q(s)ds\right),
\quad
F(x)=\exp\left(-\frac12\int_x^\infty (s-x)q^2(s)ds\right).
\end{align*}
Observe that $F_1(x) = F(x)E(x)$ and $F_2(x) = F^2(x)$ and it can be easily seen that
\begin{align*}&
E'(x)=\frac{E(x)}2q(x),
\quad
F'(x)=\frac{F(x)}2\int_x^\infty q^2(s)ds.
\end{align*}
Therefore $F'_1(x)=F_1(x)R_1(x)$ where
$R_1(x)=\frac12\left[q(x)+\int_x^\infty q^2(s)ds\right]$.
\begin{align*}
L(x)=\frac{\big[1-F_1(x)\big]F''_1(x)}{F_1'(x)^2}=\frac{1-F_1(x)}{F_1(x)}+\frac{1-F_1(x)}{F_1(x)}\frac{R_1'(x)}{R_1^2(x)}.
\end{align*}
We are interested in finding $\lim\limits_{x\rightarrow\infty}L(x)$. From section $1.1.1$ of \cite{TW1} it follows that
\begin{align*}
F_1(x)=1-\left[
\frac{e^{-\frac23x^{3/2}}}{4\sqrt{\pi} x^{3/4}}
+\frac{e^{-\frac43x^{3/2}}}{32\pi x^{3/2}}
-\frac{e^{-2x^{3/2}}}{128\pi^{3/2} x^{9/4}}
\right]\bigg(1\!+\!O(x^{-\frac32})\bigg).
\end{align*}
There is a typographical error in \cite{TW1} in their expression for this expansion, there should be $x^{3/4}$ in the denominator.  If we take this in account, we can write
\begin{align}\label{asymptoneminusF1}
1-F_1(x)\sim\frac{e^{-\frac23x^{3/2}}}{4\sqrt{\pi} x^{3/4}},
\end{align}
From \cite{Bassom1998} and lemma $3$ of \cite{NW2013} we get,
\begin{align*}
\frac12\left[q(x)+\int_x^\infty q^2(s)ds\right]
=\frac{e^{-\frac23x^{3/2}}}{4\sqrt{\pi}x^{1/4}}\bigg(1\!+\!O(x^{-\frac32})\bigg)
+
\frac{e^{-\frac43x^{3/2}}}{16\pi x}\bigg(1\!+\!O(x^{-\frac32})\bigg)
\end{align*}
which yields the following asymptotic expression for $R_1(x)$:
\begin{align}\label{asymptR1}
R_1(x)\sim \frac{e^{-\frac23x^{3/2}}}{4\sqrt{\pi}x^{1/4}}.
\end{align}
Again using the asymptotic expansion from \cite{Bassom1998} we get
\begin{align*}
\frac12\left[q'(x)-q^2(x)\right]
=
-\frac{x^{1/4}e^{-\frac23x^{3/2}}}{4\sqrt{\pi}}\bigg(1\!+\!O(x^{-\frac32})\bigg)
-\frac{e^{-\frac43x^{3/2}}}{8\pi x^{1/2}}\bigg(1\!+\!O(x^{-\frac32})\bigg).
\end{align*}
This yields
\begin{align}\label{asympR1prime}
R_1'(x)\sim-\frac{x^{1/4}e^{-\frac23x^{3/2}}}{4\sqrt{\pi}}.
\end{align}
Since $F_1(x)$ is a cdf, $F_1(x) \sim 1$ as $x \rightarrow \infty$, so
\begin{align*}
\frac{1-F_1(x)}{F_1(x)}\frac{R_1'(x)}{R_1^2(x)}\sim -1
\end{align*}
as $x\rightarrow\infty$. Thus $\lim\limits_{x\rightarrow\infty}L(x)=-1$ which establishes that the maximum of an i.i.d.\ sequence of Tracy-Widom distribution from GOE is in the Gumbel domain of attraction
\begin{align*}
F_\text{Gumbel(0,1)}(x)=\exp(-e^{-x}).
\end{align*}
Therefore, for the normalizing constants we defined, we have
\begin{align*}
\frac{{\underset{1\leq k\leq {m_p}}{\max}} Z_k-b_{m_p}}{a_{m_p}}\xrightarrow[p\rightarrow\infty]{\mathcal{D}}\text{Gumbel}(0,1)
\end{align*}
as desired.
\end{proof}
%-----------------------------------------------------------------------------

%-----------------------------------------------------------------------------
\begin{lem}\label{lem:ABA} We have
%-----------------------------------------------------------------------------
\begin{align*}&
a_m\sim\left[\frac43\right]^{\frac12}\left[\frac34\right]^{\frac16}\log^{-1/3}\left(\frac{m^2}{12\pi}\right),
\qquad
b_m\sim\left[\frac34\right]^{\frac23}\log^{2/3}\left(\frac{m^2}{12\pi}\right).
\end{align*}
as $m\rightarrow\infty$. Then for $m\rightarrow\infty$ and fixed $y\in\mathbb{R}$ one has
$\lim\limits_{m\rightarrow\infty}a_m=0$ and $\lim\limits_{m\rightarrow\infty}b_m=\infty$.

%-----------------------------------------------------------------------------

%-----------------------------------------------------------------------------
\end{lem}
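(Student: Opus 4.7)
The plan is to prove the two asymptotic equivalences separately. The one for $b_m$ is the substantive step; the formula for $a_m$ will then follow from a short manipulation, and the two stated limits are immediate from the leading $\log^{2/3}(\cdot)$ and $\log^{-1/3}(\cdot)$ growth.

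For $b_m$, I would start from the defining relation $1 - F_1(b_m) = 1/m$ and substitute the tail estimate \eqref{asymptoneminusF1}, giving
\begin{equation*}
\frac{1}{m} \sim \frac{e^{-\frac{2}{3}b_m^{3/2}}}{4\sqrt{\pi}\, b_m^{3/4}}.
\end{equation*}
Taking logarithms converts this into
\begin{equation*}
\log m = \frac{2}{3}b_m^{3/2} + \frac{3}{4}\log b_m + \log(4\sqrt{\pi}) + o(1),
\end{equation*}
a transcendental equation in $b_m$ that must be inverted asymptotically. A first iteration gives $b_m^{3/2}\sim (3/2)\log m$, so in particular $b_m\to\infty$; feeding this estimate back into the $(3/4)\log b_m$ term shows that it contributes only $O(\log\log m)$, which is $o(\log m)$. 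Since $\log(m^2/(12\pi)) = 2\log m - \log(12\pi)$ also has leading behaviour $2\log m$, the candidate $(3/4)^{2/3}\log^{2/3}(m^2/(12\pi))$ is $\sim (3/2)^{2/3}(\log m)^{2/3}$, and so the ratio $b_m/[(3/4)^{2/3}\log^{2/3}(m^2/(12\pi))]$ tends to $1$, establishing the first asymptotic equivalence.

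For $a_m=1/(mF_1'(b_m))$, I would combine the factorisation $F_1'(x)=F_1(x)R_1(x)$ used in the proof of Lemma~\ref{maxtw} with the tail estimate \eqref{asymptR1}. Since $F_1(b_m)\to 1$, this gives
\begin{equation*}
F_1'(b_m)\sim\frac{e^{-\frac{2}{3}b_m^{3/2}}}{4\sqrt{\pi}\,b_m^{1/4}}.
\end{equation*}
Dividing this by the tail estimate for $1-F_1(b_m)$ and then using $1-F_1(b_m)=1/m$ to eliminate the exponential yields the clean identity $F_1'(b_m)\sim b_m^{1/2}/m$, hence $a_m\sim b_m^{-1/2}$. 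Substituting the asymptotic already obtained for $b_m$ and noting the algebraic identity $(4/3)^{1/2}(3/4)^{1/6}=(4/3)^{1/3}$ produces the claimed form for $a_m$.

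The main technical obstacle is the inversion step for $b_m$: one must correctly propagate the $O(\log\log m)$ correction arising from $(3/4)\log b_m$ and verify that neither this term nor the additive constants affect the leading $(\log m)^{2/3}$ behaviour. Because the stated equivalence $\sim$ only requires the ratio to converge to $1$, the specific constant $12\pi$ inside the logarithm is harmless at this level and can be verified \emph{a posteriori}; any alternative constant would give the same $\sim$. Given these two asymptotic forms, the limits $a_m\to 0$ and $b_m\to\infty$ follow immediately, completing the proof.
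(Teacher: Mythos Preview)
Your proof is correct and reaches the same conclusions, but by a different and in places more streamlined route than the paper's.

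For $b_m$, the paper inverts the relation $m\sim h(b_m)=4\sqrt{\pi}\,b_m^{3/4}\exp\!\big(\tfrac{2}{3}b_m^{3/2}\big)$ explicitly via the Lambert $W$ function, obtaining $h^{-1}(m)=\big[\tfrac{3}{4}W(m^2/12\pi)\big]^{2/3}$, and then uses a mean value theorem bound on $g(x)=\log h(e^x)$ to pass from $m\sim h(b_m)$ to $b_m\sim h^{-1}(m)$; the standard asymptotic $W(x)\sim\log x$ finishes the job. Your iterative inversion of the log-equation bypasses the Lambert $W$ entirely and is more elementary. The paper's route does, however, explain where the otherwise mysterious constant $12\pi$ comes from: it is exactly what emerges when one solves $h(y)=m$ in closed form. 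As you correctly observe, at the level of $\sim$ any fixed positive constant inside the logarithm would give the same statement.

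For $a_m$, the paper carries out a second, parallel inversion (for $d_m=\exp(b_m^{3/2})$, again via a mean value theorem argument) in order to obtain $\exp(b_m^{3/2})\sim\big[(m^2/12\pi)/\log(m^2/12\pi)\big]^{3/4}$, which it then substitutes directly into the asymptotic for $R_1(b_m)$. Your device of dividing the asymptotic for $F_1'(b_m)\sim R_1(b_m)$ by that for $1-F_1(b_m)=1/m$ so that the exponential factor cancels, yielding the clean relation $a_m\sim b_m^{-1/2}$, is a genuine simplification: it avoids the second inversion altogether and makes the dependence of $a_m$ on $b_m$ transparent.
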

\begin{proof}
As stated earlier $b_m = U(m)$ where $U(m)$ is the left continuous inverse of $1/(1-F_1)$. Thus using \eqref{asymptoneminusF1} we can write
\begin{align*}&
m=\frac1{1-F_1(b_m)}\sim h(b_m)=4\sqrt{\pi}b_m^{3/4}\exp\left(\frac23b_m^{3/2}\right).
\end{align*}
Let $g(x)=\log h(e^x)=\log(4\sqrt{\pi})+\frac34x+\frac23\exp\left(\frac32x\right)$. For any $y \in \rone$,
\begin{align*}
\left|\frac{d g^{-1}(y)}{dy}\right|=\left|\frac1{g'(g^{-1}(y))}\right|=\frac1{\frac34+\exp\left(\frac32g^{-1}(y)\right)}\leq \frac43.
\end{align*}
Then, by the mean value theorem
\begin{align*}&
\left|\log h^{-1}(m)-\log b_m\right|
=
\left|g^{-1}(\log m)-g^{-1}\circ g(\log b_m)\right|
\\&\qquad=\;
\left|g^{-1}(\log m)-g^{-1}\left(\log h(b_m)\right)\right| \\&\qquad\leq\;
\frac43\left|\log(m)-\log h(b_m)\right|  \;\xrightarrow{x\rightarrow\infty} 0.
\end{align*}
Therefore
\begin{align*}
b_m\sim h^{-1}(m)=\left[\frac34 W\left(\frac{m^2}{12\pi}\right)\right]^{2/3}\sim\left[\frac34\log\left(\frac{m^2}{12\pi}\right)\right]^{2/3},
\end{align*}
where $W$ is the Lambert W function. Second, defining $d_m=\exp(b_m^{3/2})$, we also have
\begin{align*}
m=\frac1{1-F_1(\log^{2/3}d_m)}\sim \tilde h(d_m)=4\sqrt{\pi}\log^{1/2}(d_m) d_m^{2/3}.
\end{align*}
Now for $\tilde g(x)=\log \tilde h(e^x)=\log(4\sqrt{\pi})+\frac12\log x+\frac23x$, which is invertible as a function $[0,\infty)\rightarrow\mathbb{R}$,
\begin{align*}
\left|\frac{d g^{-1}(y)}{dy}\right|=\left|\frac1{g'(g^{-1}(y))}\right|=\frac1{\frac1{2g^{-1}(y)}+\frac23}\leq \frac32
\end{align*}
for any $y\in\mathbb{R}$. Then, by the mean value theorem
\begin{align*}&
\left|\log \tilde h^{-1}(m)-\log d_m\right|
=
\left|g^{-1}(\log m)-g^{-1}\circ g(\log d_m)\right|
\\&\qquad=\;
\left|g^{-1}(\log m)-g^{-1}\left(\log \tilde h(d_m)\right)\right| \\&\qquad\leq\;
\frac32\left|\log(m)-\log \tilde h(d_m)\right|  \;\xrightarrow{m\rightarrow\infty} 0.
\end{align*}
So
\begin{align*}
\exp(b^{3/2}_m)=d_m\sim\tilde h^{-1}(m)=\exp\left(\frac34W\left(\frac{m^2}{12\pi}\right)\right)\sim\left[\frac{m^2/12\pi}{\log(m^2/12\pi)}\right]^{3/4}.
\end{align*}
Therefore,
\begin{align*}&
a_m=\frac1{mF_1'(b_m)}=\frac1{mF_1(b_m)R_1(b_m)}
\sim
\frac{4\sqrt{\pi}b_m^{1/4}\exp\left(\frac23b_m^{3/2}\right)}{m}
\\&\qquad\sim\frac{4\sqrt{\pi}\left[\frac34\log\left(\frac{m^2}{12\pi}\right)\right]^{1/6}\left[\frac{m^2/12\pi}{\log(m^2/12\pi)}\right]^{1/2}}{m}
\sim\frac{\left[\frac43\right]^{1/2}\left[\frac34\right]^{1/6}}{\log^{1/3}(m^2/12\pi)},
\end{align*}
as desired.
\end{proof}
%-----------------------------------------------------------------------------

\noindent We now prove Theorem \ref{MEG}.

%-----------------------------------------------------------------------------
\begin{proof}
We use \cite[Theorem 1]{Johnstone2008}. The conditions required are that
\begin{align*}
\lim_{p\rightarrow\infty}\frac{\min(p,n_{1p})}{n_{1p}+n_{2p}}>0,
\qquad
\lim_{p\rightarrow\infty}\frac{p}{n_{2p}}<1,
\end{align*}
which are satisfied by the assumptions of the theorem. Then $\mu_{p,n_{1p},n_{2p}}$ and $\sigma_{n,p}$ are defined as in Equation (5) on p. 2641, and so, under the null hypothesis, by \cite[Theorem 1]{Johnstone2008} with $s_0=0$ there must be a $C>0$ such that
$$
\Big|\Pb{X^p_k\leq x}-\Pb{Z_k\leq x}\Big|\leq\frac{C}{p^{2/3}}e^{-x/2}
$$

\noindent for all $x\geq 0$. Second, for any fixed $y\in\mathbb{R}$, $\lim_{p\rightarrow\infty}a_{m_p}y+b_{m_p}=\infty$ by Lemma \ref{lem:ABA}, so there is some $P(y)>0$ such that for all $p\geq P(y)$, $a_{m_p}y+b_{m_p}>0$. Then, for $Z_1,Z_2,...$ a sequence of independent real Tracy-Widom random variables, we have for all $p\geq P(y)$
% {\setlength{\mathindent}{0pt}
\begin{align*}&
\Big|\Pb{Y^p\leq y}-\exp\left(-e^{-y}\right)\Big|
\notag\\&\qquad\leq
\left|\Pb{\underset{1\leq k\leq m_p}{\max} X^p_k\leq a_{m_p}y+b_{m_p}}
-\Pb{\underset{1\leq k\leq {m_p}}{\max} Z_k\leq a_{m_p}y+b_{m_p}}\right|
\notag\\&\hspace{80pt}+\left|\Pb{\underset{1\leq k\leq {m_p}}{\max} Z_k\leq a_{m_p}y+b_{m_p}}-\exp\left(-e^{-y}\right)\right|
\notag\\&\qquad\leq
\sum_{k=1}^{m_p}\left|\prod_{l=1}^{k-1}\Pb{Z_l\leq a_{m_p}y+b_{m_p}}\prod_{l=k}^{m_p}\Pb{X^p_l\leq a_{m_p}y+b_{m_p}}
\right.\notag\\&\hspace{80pt}\left.-\prod_{l=1}^{k}\Pb{Z_l\leq a_{m_p}y+b_{m_p}}\prod_{l=k+1}^{m_p}\Pb{X^p_l\leq a_{m_p}y+b_{m_p}}\right|
\notag\\&\hspace{60pt}+\left|\Pb{\underset{1\leq k\leq {m_p}}{\max} Z_k\leq a_{m_p}y+b_{m_p}}-\exp\left(-e^{-y}\right)\right|
\notag\\&\qquad\leq
{m_p}\Big|\Pb{X^p_1\leq a_{m_p}y+b_{m_p}}-\Pb{Z_1\leq a_{m_p}y+b_{m_p}}\Big|
\\&\hspace{60pt}+\left|\Pb{\underset{1\leq k\leq {m_p}}{\max} Z_k\leq a_{m_p}y+b_{m_p}}-\exp\left(-e^{-y}\right)\right|
\notag\\&\qquad\leq
C\frac{{m_p}}{p^{2/3}}e^{-\frac{1}{2}(a_{m_p}y+b_{m_p})}
+\left|\Pb{\underset{1\leq k\leq {m_p}}{\max} Z_k\leq a_{m_p}y+b_{m_p}}-\exp\left(-e^{-y}\right)\right|.
\end{align*}
Thus since $\lim_{p\rightarrow\infty}m_p/p^{2/3}<\infty$ and $\lim_{p\rightarrow\infty}a_{m_p}y+b_{m_p}=\infty$, using lemma \ref{maxtw} we get
\begin{align*}&
\lim_{p\rightarrow\infty}\Big|\Pb{Y^p\leq y}-\exp\left(-e^{-y}\right)\Big|
\notag\\&\hspace{80pt}\leq
\lim_{p\rightarrow\infty}\left|\Pb{\underset{1\leq k\leq {m_p}}{\max} Z_k\leq a_{m_p}y+b_{m_p}}-\exp\left(-e^{-y}\right)\right|
\notag\\&\hspace{80pt}\leq
0.
\end{align*}
Since this is true for any $y\in\mathbb{R}$, the result follows.
\end{proof}
%-----------------------------------------------------------------------------

%%%%%%%%%%%%%%%%%%%%%%%%%%%%%%%%%%%%%%%%%%%%%%%%%%%%%%%%%%%%%%%%%%%%%%%%%%%%%%
\subsection{Approximate $\alpha$ level test}\label{covmatrix}
%%%%%%%%%%%%%%%%%%%%%%%%%%%%%%%%%%%%%%%%%%%%%%%%%%%%%%%%%%%%%%%%%%%%%%%%%%%%%%
\noindent As a motivating example from multivariate analysis, consider the following hypothesis testing framework to conduct pairwise testing of equality of covariance matrices arising from a multivariate normal sample. Let
$$
H_{01}:\Sigma_{11}=\Sigma_{12},\, H_{02}:\Sigma_{21}=\Sigma_{22}, \, \ldots, H_{0m}:\Sigma_{m1}=\Sigma_{m2}.
$$
Define the global hypothesis $H_0$ as $H_0 = \bigcap_{k=1}^m H_{0k}$. This implies that $H_0$ is true if and only if each of the component hypothesis $H_{0k}$ is true. Thus, we {\it accept} $H_0$ if and only if every component hypothesis $H_{0k}$ is accepted. We can equivalently say that we reject $H_0$ if any component hypothesis $H_{0k}$ is rejected.

Let $R_k$ denote the rejection region corresponding to the $k^{\text{th}}$ hypothesis test, so that $R = \bigcup_{k=1}^m R_k$ is the rejection region corresponding to $H_0$. Let $n_{k1},n_{k2}$ and $S_{k1},S_{k2}$ denote the sample sizes and covariance estimators, respectively, for the $k^{\text{th}}$ hypothesis test, where $k=1,2,\ldots,m$. By construction, $S_{k1}$ and $S_{k2}$ will be independent. If we further assume that each of the $m$ samples follow a multivariate normal distribution, then under $H_{0k}$ we would have $S_{k1} \sim W_p(\Sigma_k,n_{k1})$ and $S_{k2} \sim W_p(\Sigma_k,n_{k2})$, where $\Sigma_k$ is the common covariance matrix under $H_{0k}$. Thus the test statistic for $H_{0k}$ is  $\theta_{p,k}$, which is the largest eigenvalue of $(n_{k1}S_{k1} + n_{k2}S_{k2})^{-1}n_{k2}S_{k2}$. Then $\max\{\theta_{p,1},\theta_{p,2},\ldots,\theta_{p,m}\} \Rightarrow G_p$ as $m \rightarrow \infty$ where $G_p$ denotes the cdf of a univariate Gumbel distribution, where we explicitly write the dependence
on the dimension $p$.

Using this, we can construct an approximate, high-dimensional $\alpha$-level test for $H_0: \Sigma_{1k}=\Sigma_{2k},\;\forall k=1,...,m$ using the union-intersection approach. Indeed, we could reject $H_0$ when $\max\{\theta_{p,1},\theta_{p,2},\ldots,\theta_{p,m}\} > c_{\alpha}$,
where
\begin{align*}
c_\alpha=\left[1+\exp\bigg(\sigma_{p,n_{1p},n_{2p}}a_{m_p}\log\big(-\log[1-\alpha]\big)
-\sigma_{p,n_{1p},n_{2p}}b_{m_p}-\mu_{p,n_{1p},n_{2p}}\bigg)\right]^{-1}.
\end{align*}
This would an approximate $\alpha$-level test in the sense that for $p_n/n\rightarrow (0,\infty)$ and $m_p/p^{2/3}\rightarrow(0,\infty)$,
\begin{align*}
\lim_{p\rightarrow\infty}\Pb{\text{Reject }H_0\,\vert\, H_0}=\alpha.
\end{align*}
To see this, note that in the notation of Theorem \ref{MEG},
\begin{align*}&
\Pb{\text{Reject }H_0\,\vert\, H_0}=\Pb{\underset{1\leq k\leq {m_p}}{\max} \theta_{p,1} > c_\alpha\,\bigg\vert\, H_0}
\\&\quad=
\text{P}\left[
\frac{\underset{1\leq k\leq {m_p}}{\max}\text{logit}\,\theta_{p,1}\!\left([n_{k1}S_{k1}+n_{k2}S_{k2}]^{-1}S_{k2}\right)-\mu_{p, n_{1p}, n_{2p}}}
{\sigma_{p, n_{1p},n_{2p}} }
\right.\\&\hspace{160pt}>\left.
\vphantom{\frac{\underset{1\leq k\leq {m_n}}{\max}\text{logit}\,\lambda_1\!\left([W_{1k}+W_{2k}]^{-1}W_{2k}\right)-\mu_{p, n_{1p},n_{2p}}}{\sigma_{p, n_{1p},n_{2p}} }}
\frac{\text{logit}\,c_\alpha-\mu_{p, n_{1p},n_{2p}}}{\sigma_{p, n_{1p},n_{2p}}}\,\bigg\vert\, H_0
\right]
\\&\quad=
\Pb{\underset{1\leq k\leq {m_p}}{\max}X^p_k > \frac{\text{logit}\,c_\alpha-\mu_{p, n_{1p},n_{2p}}}{\sigma_{p, n_{1p},n_{2p}}}\,\bigg\vert\, H_0}
\\&\quad=
\Pb{Y^p> \frac{\text{logit}\,c_\alpha-\mu_{p, n_{1p},n_{2p}}-\sigma_{p, n_{1p},n_{2p}}b_{m_p}}{\sigma_{p, n_{1p},n_{2p}}a_{m_p}}\,\bigg\vert\, H_0},
\end{align*}
so according to this same theorem it would hold that
\begin{align*}
\lim_{p\rightarrow\infty}\Pb{\text{Reject }H_0\,\vert\, H_0}&=
1-\exp\left(-\exp\left(-\frac{\text{logit}\,c_\alpha-\mu_{p,n_{1p},n_{2p}}-\sigma_{p,n_{1p},n_{2p}}b_{m_p}}{\sigma_{p,n_{1p},n_{2p}}a_{m_p}}\right)\right)
\\&=\alpha,
\end{align*}
as wanted.

%%%%%%%%%%%%%%%%%%%%%%%%%%%%%%%%%%%%%%%%%%%%%%%%%%%%%%%%%%%%%%%%%%%%%%%%%%%%%%
\section{Simulation}
%%%%%%%%%%%%%%%%%%%%%%%%%%%%%%%%%%%%%%%%%%%%%%%%%%%%%%%%%%%%%%%%%%%%%%%%%%%%%%
To explore the finite $(m, n, p)$ behavior of our theoretical domain of attraction results we carry out two numerical studies in this section.  We consider two different large-scale inferential problems: pairwise testing for equality of covariance matrices and  multivariate analysis of variance. In each simulation setting, we compute the power curves for different dimensions over one-dimensional spaces of alternatives.

%%%%%%%%%%%%%%%%%%%%%%%%%%%%%%%%%%%%%%%%%%%%%%%%%%%%%%%%%%%%%%%%%%%%%%%%%%%%%%
\subsection{Equality of Covariance Matrices}
%%%%%%%%%%%%%%%%%%%%%%%%%%%%%%%%%%%%%%%%%%%%%%%%%%%%%%%%%%%%%%%%%%%%%%%%%%%%%%
The theory behind this test was discussed in Subsection \ref{covmatrix}. We have $m$ independent population pairs. For the $k^{\text{th}}$ population pair $(k=1,2,\ldots,m)$, let $k_1$ be the index of the first population in the $k^{\text{th}}$ pair and $k_2$ be the index of the second population in the $k^{\text{th}}$ pair. Let $n_{k1}$ and $n_{k2}$ be the sample sizes of the first and the second population in the $k^{\text{th}}$ pair. Let $\Sigma_{k1}, \Sigma_{k2}$ be the corresponding covariance matrices for the $k^{\text{th}}$ pair.

We simulated two independent $p$ dimensional multivariate normal data sets that form the two design matrices of dimensions $n_{k1}\times p$ and $n_{k2}\times p$ respectively. The test statistic to test the $k^{\text{th}}$ null hypothesis is the largest eigenvalue $\theta_{p,k}$ of $(n_{k1}S_{k1} + n_{k2}S_{k2})^{-1}n_{k2}S_{k2}$ where $S_{k1}$ and $S_{k2}$ are the sample covariance matrix analogues of $\Sigma_{k1}$ and $\Sigma_{k2}$ respectively. \\

\noindent We considered two different regimes for generating covariance matrix pairs that need to be tested for equality. In the first regime, for each $k=1,\ldots,m$ we set $\Sigma_{k1} = I_p$ and $\Sigma_{k2}=\gamma I_p$, where $I_p$ denotes the $p$ dimensional identity matrix and $\gamma \in [1,2.5]$ is a non-negative scalar giving rise to a one parameter family of alternatives. We then performed $8000$ simulations to test for simultaneous equality of $m=500$ covariance matrix pairs for each value of $\gamma$ in the grid. We repeated the exercise for matrix dimensions ranging from $p=10$ to $100$,  while the sample sizes for each pair were chosen as $n_1=n_2=p/2$. We then computed the resulting approximations to the true power curves. The results for $p=10,30,70,100$ are plotted on Figure \ref{fig:PowerCovEq}.\\

\begin{figure}[h!]
\centering
\includegraphics[width=8.2cm,height=8.2cm]{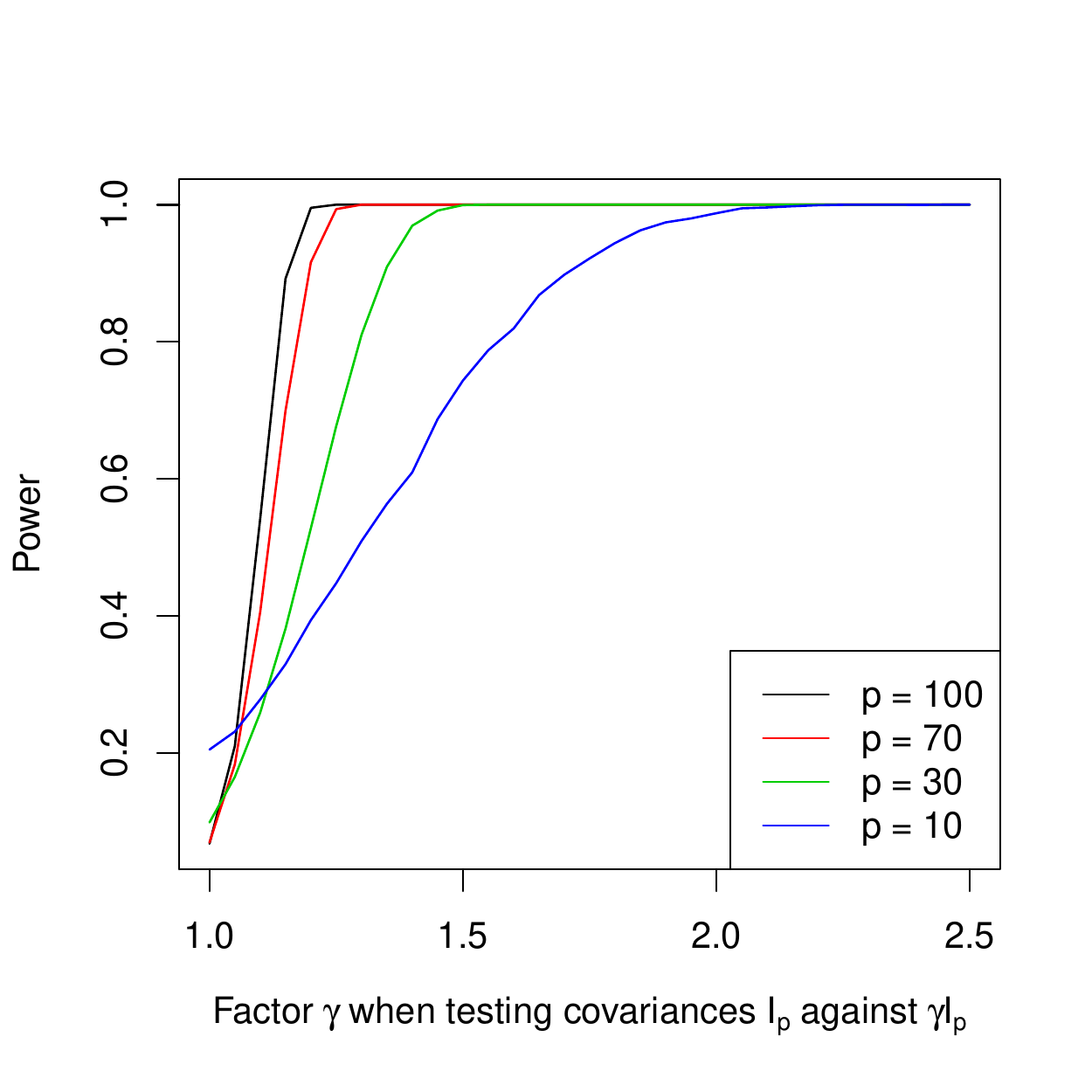}
\caption{Power curves for simultaneous covariance equality tests}
\label{fig:PowerCovEq}
\end{figure}

\noindent  Note that when $\gamma = 1$, both the null and the alternative hypothesis represent the identity matrix. As can be seen from Figure \ref{fig:PowerCovEq}, our approximation does very well in detecting departures from the null hypothesis even for very small values of $p$ and $\gamma$. The power curve approaches $1$ very quickly and gets much sharper even for a moderate values of $p$ and mild increase of $\gamma$ from $1$ . This supplements the theoretical asymptotic results rather well.

%%%%%%%%%%%%%%%%%%%%%%%%%%%%%%%%%%%%%%%%%%%%%%%%%%%%%%%%%%%%%%%%%%%%%%%%%%%%%%
\subsection{MANOVA}
%%%%%%%%%%%%%%%%%%%%%%%%%%%%%%%%%%%%%%%%%%%%%%%%%%%%%%%%%%%%%%%%%%%%%%%%%%%%%%

\noindent Our second set-up involved $m$ independent batches. Within each batch, we had $r$ different groups each of which contained $n$ i.i.d.\ samples from a $p$-dimensional normal distribution. Between groups of the same batch, we had equal covariances but potentially unequal means. \\
\begin{center}\begin{tabular}{ccc}
$Y_{111}, \cdots, Y_{11n} \sim N_p(\mu_{11},\Sigma_1),$ &
&
$Y_{m11}, \cdots, Y_{m1n} \sim N_p(\mu_{1r},\Sigma_m),$ \\
$\cdots$ &
$\cdots$ &
$\cdots$ \\
$\underbrace{Y_{1r1}, \cdots, Y_{1rn} \sim N_p(\mu_{11},\Sigma_1),}$ &
&
$\underbrace{Y_{mr1}, \cdots, Y_{mrn} \sim N_p(\mu_{1r},\Sigma_m).}$ \\
Batch $1$ &
&
Batch $m$ \\
\end{tabular}\end{center}
We wanted to test the global null hypothesis of equality of group means across independent batches,
\begin{center}
	\begin{tabular}{rc}
		\multirow{3}{*}{$H_0$:} & ${\bf \mu_{11}}=\cdots={\bf \mu_{1r}},$ \\
		& $\cdots$ \\
		& ${\bf \mu_{m1}}=\cdots={\bf \mu_{mr}}.$
	\end{tabular}
\end{center}
\noindent It is to be emphasized that each row in the above null hypothesis expression is a $p$ dimensional vector.
For each batch $1\leq k\leq m$, we computed the matrices
\begin{align*}
A_k = \sum_{l=1}^r\sum_{i=1}^n (Y_{kli}-\bar{Y}_{kl})(Y_{kli}-\bar{Y}_{kl})',
\qquad
B_k = n\sum_{l=1}^r (\bar{Y}_{kl}-\bar{Y}_k)(\bar{Y}_{kl}-\bar{Y}_k)',
\end{align*}
where
\begin{align*}&
\bar{Y}_{kl} = \frac1n\sum_{i=1}^nY_{kli},
\qquad
\bar{Y}_k = \frac1r\sum_{l=1}^p\bar{Y}_{kl}.
\end{align*}
That is, for the $k^{\text{th}}$ batch, $A_k$ was the {\it within group} covariance matrix and $B_k$ was the {\it between group} covariance matrix. Under the null hypothesis, we had $A_k\sim\text{W}_p(r(n-1),\Sigma_k)$ independent of $B_k\sim\text{W}_p(r-1,\Sigma_k)$ so that
\begin{center}
\begin{tabular}{c}
$\theta_1 = \lambda_1([A_1+B_1]^{-1}B_1)\sim \theta_{1,1}(p,r(n-1),r-1\big)$ \\
$\cdots$ \\
$\theta_m = \lambda_1([A_m+B_m]^{-1}B_m)\sim \theta_{m,1}(p,r(n-1),r-1\big)$
\end{tabular}
\end{center}
where $p$ refers to the dimension, $r(n-1)$ refers to the ``error'' degrees of freedom and $r-1$ is the ``hypothesis'' degrees of freedom for each batch. Furthermore, $\theta_1,\ldots,\theta_m$ were independent because the batches were independent. Consider the following argument: write $n_1=r-1$ and $n_2=r(n-1)$, and suppose that for fixed $n$,  $p,r,m\rightarrow\infty$ with
$\lim_{p\rightarrow\infty}m/p^{2/3}<\infty$ and $\lim_{p\rightarrow\infty}p/r>0$. Then $n_{1}$ and $n_{2}\rightarrow\infty$ and
\[\lim_{p\rightarrow\infty}\frac{\min(p,n_{1})}{n_{1p}+n_{2}}
=\lim_{p\rightarrow\infty}\frac{\min\Big(\frac{p}{r},1-\frac1{r}\Big)}{n-\frac1{r}}
=\frac{\min\Big(\lim\limits_{p\rightarrow\infty}\frac{p}{r},1\Big)}{n}>0.\]\\
Then, according to Theorem \ref{MEG}, we would find that
\begin{align*}
Z=\frac{
\underset{1\leq k\leq m}{\max}\text{logit}\big(\theta_k\big)
-\mu_{p,r-1,r(n-1)}-b_{m}\sigma_{p,r-1,r(n-1)}
}{a_{m}\sigma_{p,r-1,r(n-1)}}
\xrightarrow[p \rightarrow\infty]{\mathcal{D}}\text{Gumbel}(0,1),
\end{align*}
where $a_m$, $b_m$ are defined as Equation \eqref{norm-consts}. Hence, an approximate $\alpha$-test for testing $H_0$ could be given by rejecting when $Z>F_{\text{Gumbel}(0,1)}^{-1}(1-\alpha)$. As an aside, in some situations it could be convenient to work with the following reparametrization outlined in \cite{MKB}:
\begin{align*}
\theta_{k,1}\Big(p,r(n-1),r-1\Big) &\myeq \theta_{k,1}\Big(r-1,r(n-1)+r-1-p,p\Big) \\
  &\myeq \theta_{k,1}\Big(r-1,rn-1-p,p\Big).
\end{align*}
It can be easily shown that the asymptotic regime and hence the simulation results are invariant under the above reparametrization. \\

\noindent Now in order to generate the power curves for our hypothesis testing framework, we tested against the one-parameter family of alternatives
\begin{center}
\begin{tabular}{rc}
\multirow{3}{*}{$H_1(\gamma)$:} &
$(\mu_{11},...,\mu_{1r})
=(\begin{bmatrix}1^\gamma \\ \dots \\1^\gamma\end{bmatrix},
\begin{bmatrix}2^\gamma \\ \dots \\2^\gamma\end{bmatrix},
...,
\begin{bmatrix}r^\gamma \\ \dots \\r^\gamma\end{bmatrix})$ \\
& $\cdots$ \\
& $(\mu_{m1},...,\mu_{mr})
=(\begin{bmatrix}1^\gamma \\ \dots \\1^\gamma\end{bmatrix},
\begin{bmatrix}2^\gamma \\ \dots \\2^\gamma\end{bmatrix},
...,
\begin{bmatrix}r^\gamma \\ \dots \\r^\gamma\end{bmatrix}).$
\end{tabular}
\end{center}
for $\gamma\in[0,1]$. We performed $8000$ simulation runs for each $p,r,\gamma$ combination. This was done $p$ ranging from $p=10$ to $p=100$, and for each such choice of $p$ we set $r=2p$. We then computed approximations to the true power curves based on these simulations. The results for $p=10,30,70,100$ are plotted on Figure \ref{fig:PowerMANOVA}.\\
\begin{figure}[h!]
\centering
\includegraphics[width=8.2cm,height=8.2cm]{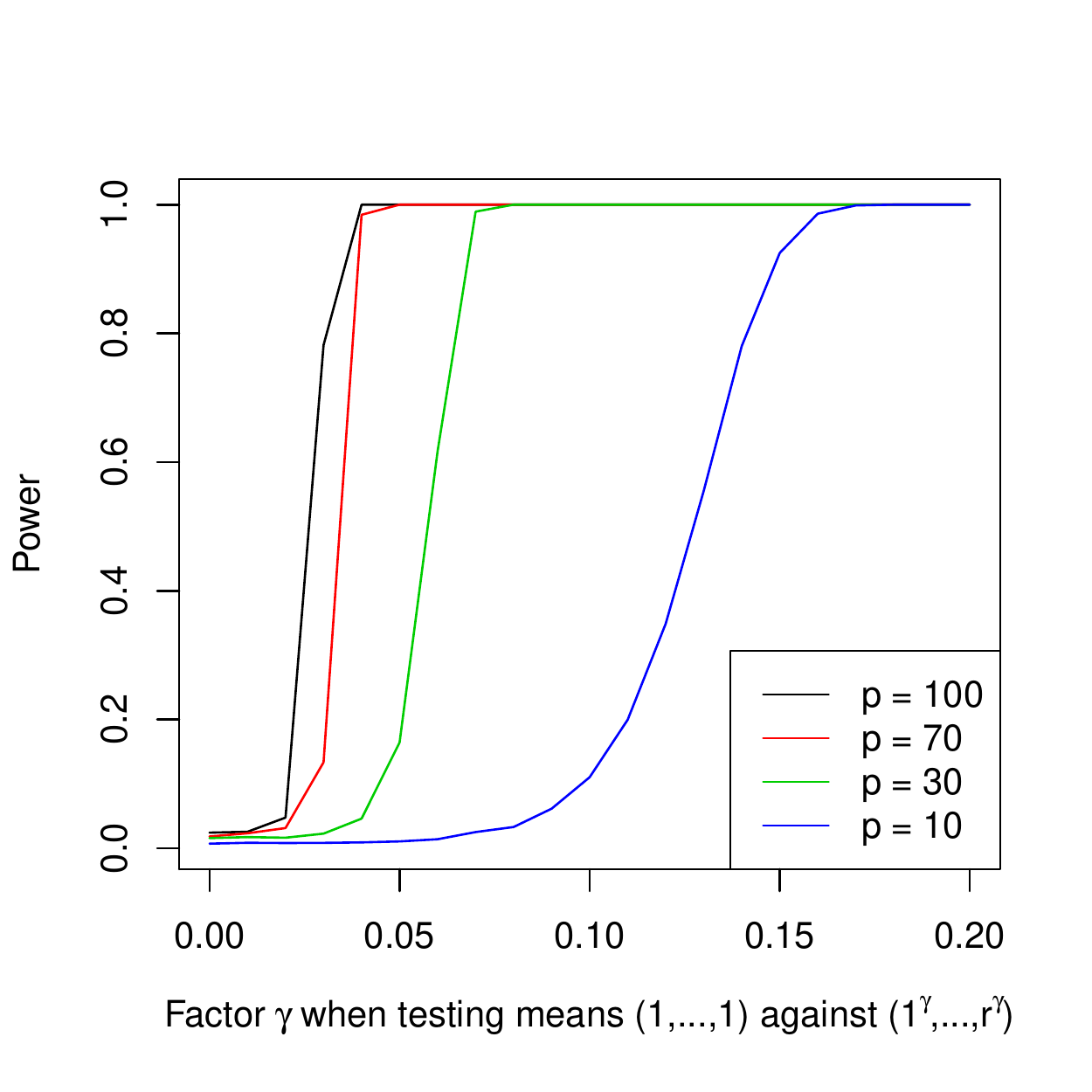}
\caption{Power curves for simultaneous MANOVA tests}
\label{fig:PowerMANOVA}
\end{figure}

\noindent Note that when $\gamma=0$, the alternate hypothesis and the  null hypothesis coincided. Just like in the first simulation setting, the results are good even for small to moderate values of $p$ and for very mild departures from the null hypothesis, as evidenced by small positive values of $\gamma$ yielding power close to $1$. Further, as expected, the power curves even steeper as the problem dimension increases from $p=10$ to $p=100$. This is in agreement with our theoretical findings.

\section{Discussion}

The greatest root statistic arises as the test statistic in several multivariate statistical analysis settings. We explored the problem of several independent multivariate analysis testing problems when each hypothesis instance is the greatest root statistic. It is not difficult to fathom casting batch MANOVA or batch pairwise testing for equality of covariance matrices in our hypothesis testing framework. In this article we prove that the maximal domain of attraction of an i.i.d.\ sequence of greatest root statistics arising out of such batch testing settings is the Gumbel distribution. We present the efficacy of the asymptotic results through two canonical multivariate analysis techniques. \\

The results in this article are quite general and can, in principle, be applied to any situation where several independent instances of the greatest root statistics are employed as the test statistic. In particular, one can recast the underlying model in this article as array data where the $m$ dimension represents the various faces of the arrays.  Array variate random variables are mainly useful for multiply labeled random variables that can naturally be arranged in array form. Some examples include response from multi-factor experiments, two-three dimensional image and video data, spatial-temporal data, repeated measures data.  The methods of this article can be used to test homogeneity across the faces of the array. \\

%%%%%%%%%%%%%%%%%%%%%%%%%%%%%%%%%%%
\section{\refname}
\bibliographystyle{elsarticle-num-names}
\bibliography{GumbelGreatestRoot_JMA}
\end{document}